\newtheorem{thm}{Theorem}[section]
\newtheorem{lem}[thm]{Lemma}
\newtheorem{prop}[thm]{Proposition}
\theoremstyle{definition}
\newtheorem{defn}[thm]{Definition}
\newtheorem{rmk}[thm]{Remark}
\newtheorem*{ack}{Acknowledgements}
\numberwithin{equation}{section}
\def\Z{{\mathbb Z}}
\def\sO{\mathcal{O}}
\def\m{\mathfrak{m}}
\DeclareMathOperator{\Pic}{Pic}
\DeclareMathOperator{\Cox}{Cox}
\title[]
{Characterization of toric varieties via int-amplified endomorphisms}
\author{Shou Yoshikawa}
\address{Graduate school of Mathematical Sciences, the University of Tokyo, Komaba, Tokyo,
153-8914, Japan}
\email{yoshikaw@ms.u-tokyo.ac.jp}
\begin{document}

\begin{abstract}
In this paper, we obtain a characterization of toric varieties via int-amplified endomorphisms.
We prove that if $f \colon X \to X$ is an int-amplified endomorphism of a smooth complex projective variety $X$, then $X$ is toric if and only if $f_*L$ is a direct sum of line bundles on $X$ for every line bundle $L$.
\end{abstract}

\maketitle

\setcounter{tocdepth}{1}

\section{Introduction}
Thomsen \cite{thomsen} proved that if $X$ is a smooth toric projective variety over an algebraically closed field of positive characteristic, the Frobenius push-forward $F_*L$ is a direct sum of line bundles on $X$  for every line bundle $L$.
Later, Achinger \cite{achinger} proved that this property indeed characterizes smooth toric varieties.
His proof depends on Kunz's theorem, which states that a Noetherian ring of prime characteristic is regular if and only if its Frobenius map is flat.
In this paper, we discuss a characteristic zero analog of this characterization via an int-amplified endomorphism instead of the Frobenius morphism.

A finite endomorphism $f \colon X \to X$ of a projective scheme $X$ is called \emph{int-amplified} if $f^*A \otimes A^{-1}$ is ample for some ample line bundle $A$ on $X$.
Multiplication maps on toric projective varieties are typical examples of int-amplified endomorphisms.
Note that every non-invertible endomorphism is int-amplified on a projective scheme with Picard rank one.
The following characterization gives a partial answer to the question of when admitting an int-amplified endomorphism implies being toric (cf. \cite{beauville}, \cite{hm03}, \cite{hwang-nakayama}, \cite{nakayama02}, \cite{ps89},\cite{meng-zhang-char}, \cite{meng-zhong}). 

\begin{thm}\textup{(Theorem \ref{only if part}, Theorem \ref{toric})}\label{main theorem}
Let $X$ be a smooth projective variety over an algebraically closed field $k$ of characteristic zero and $f$ be an int-amplified endomorpshim defined over $k$.
Then $X$ is toric if and only if $f_*L$ is a direct sum of line bundles on $X$ for every line bundle $L$.
\end{thm}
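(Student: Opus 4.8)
The plan is to prove the two implications by rather different routes: the forward direction reduces to a statement about free modules over a polynomial ring, while the converse is the characteristic-zero analogue of Achinger's theorem and is where the real work lies. For the \emph{only if} direction (Theorem~\ref{only if part}), suppose $X$ is toric; then $\Cox(X)=k[x_1,\dots,x_n]$ is a polynomial ring, $\Pic(X)\cong\Z^r$, and $X=U/G$ is a good quotient, where $U\subseteq\A^n=\Spec\Cox(X)$ is the complement of the irrelevant locus (of codimension $\ge 2$), the Cox torus $G=\Hom(\Pic(X),\mathbb{G}_m)\cong\mathbb{G}_m^r$ acts linearly through the grading, and $q\colon U\to X$ is a $G$-torsor. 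Since $f$ is int-amplified, $f^*\colon\Pic(X)\to\Pic(X)$ is injective with finite cokernel, hence dual to an isogeny $\psi\colon G\to G$; using functoriality of the Cox construction one lifts $f$ to a $\psi$-equivariant finite morphism $\tilde f\colon\A^n\to\A^n$ with $q\circ\tilde f=f\circ q$ — the lift exists a priori only over $U$, since $\Pic(U)=0$, and extends across the irrelevant locus by the codimension bound. Now $\tilde f$ is a finite morphism between regular schemes of equal dimension, hence flat, so $\tilde f_*\sO_{\A^n}$ is a finitely generated projective, hence free, $k[x_1,\dots,x_n]$-module. Passing to $G$-isotypic components and descending along $q$ turns this free decomposition into a direct-sum-of-line-bundles decomposition of $f_*L$ for every $L\in\Pic(X)$; the points requiring care are the existence, finiteness, and equivariance of $\tilde f$.

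For the \emph{if} direction (Theorem~\ref{toric}), assume $f_*L$ is a direct sum of line bundles for every $L\in\Pic(X)$. Since $\mathrm{char}\,k=0$ and $f$ is finite flat of degree $d=\deg f\ge 2$, the normalized trace splits $\sO_X$ off $f_*\sO_X$, so $f_*\sO_X\cong\sO_X\oplus\bigoplus_i\sO_X(-D_i)$ with the $D_i$ nonzero and effective, and iterating the hypothesis shows every $f^m_*\sO_X$ splits into line bundles. Combining this with int-amplifiedness — the eigenvalues of $f^*$ on $\NS(X)_\R$ have absolute value $>1$ — I would first deduce that $X$ is rationally connected: if $\mathrm{Alb}(X)\neq 0$, then the splitting of $f_*L$ fails for a suitable $L$ pulled back from $\mathrm{Alb}(X)$, just as multiplication maps on an elliptic curve fail the criterion. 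Then $\Pic(X)$ is finitely generated and torsion-free and $H^i(X,\sO_X)=0$ for $i>0$, so the Cox ring $\Cox(X)=\bigoplus_D H^0(X,\sO_X(D))$ is well defined, and it suffices to exhibit finitely many algebraically independent homogeneous elements generating it: a smooth projective variety with free Picard group of finite rank whose Cox ring is a polynomial ring is toric, by the Cox/Hu--Keel presentation of a Mori dream space as a GIT quotient.

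To produce such generators, lift $f$ to a finite endomorphism $\tilde f$ of $\Spec\Cox(X)$ — the lift exists by the same torsor argument, the universal torsor over $X$ having trivial Picard group and only constant units — equivariantly for an isogeny of the Cox torus $G$. The hypothesis that $f_*L$ splits for \emph{every} $L$ is precisely what forces $\tilde f_*\sO$ to be a \emph{free} $\Cox(X)$-module of rank $d$, $G$-equivariantly; equivalently $\Cox(X)$ is free of rank $d>1$ over the isomorphic subring $\varphi(\Cox(X))$, where $\varphi=\tilde f^{\#}$ strictly contracts the $\Pic(X)$-grading, dually to the expansion of $f^*$. The crux — the analogue of Kunz's theorem in Achinger's proof — is to conclude from this that $\Cox(X)$ is a polynomial ring, and I expect this to be the main obstacle and the genuinely characteristic-zero part of the argument: there is no naive Kunz-type statement in characteristic $0$ (for instance $\mathbb{G}_m$ carries finite flat endomorphisms of every degree), so one must use more than flatness of $\varphi$ — the strict contraction of the grading together with the positivity of the support of $\Cox(X)$, perhaps via a vanishing theorem, or via a direct reconstruction of the maximal torus action on $X$ from the divisors $D_i$ and their iterated preimages. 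Once $\Cox(X)$ is a polynomial ring, $X$ is toric and the proof is complete.
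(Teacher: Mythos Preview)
Your overall architecture matches the paper's: both directions go through the Cox ring, lifting $f$ to a finite graded endomorphism $\phi$ of $\Cox(X)$, and the ``only if'' part is essentially identical --- your geometric lift $\tilde f\colon\A^n\to\A^n$ is exactly the $\phi$ of the paper's Proposition~\ref{endomorphism of cox ring}, and the isotypic decomposition of $\tilde f_*\sO_{\A^n}$ is the paper's graded-module argument verbatim. (Int-amplifiedness is not needed here: $f_*f^*=\deg f$ already gives finite cokernel of $f^*$ on the free group $\Pic(X)$.)

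For the ``if'' direction your outline is right but two points are underdeveloped. First, the step you flag as ``the main obstacle'' is in fact a known theorem: a Noetherian local ring admitting a finite flat \emph{contracting} local endomorphism (i.e.\ some iterate sends the maximal ideal into its square) is regular. This is the characteristic-free generalization of Kunz's theorem in \cite[Theorem~13.3]{aim}, and it is exactly what the paper invokes; your observation that $f^*$ strictly expands the grading is precisely what makes $\phi$ contracting on $\Cox(X)_{\mathfrak m}$ (see Lemma~\ref{characterization of regularity}). So there is no need to ``reconstruct the torus action'' by hand.

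Second, to apply that regularity criterion you need $\Cox(X)$ to be Noetherian, i.e.\ a finitely generated $k$-algebra, and rational connectedness alone does not give this. The paper proves more: $X$ is of \emph{Fano type}, whence finite generation by \cite{bchm}. The route is subtler than your sketch suggests. One first passes to an \'etale cover $Y\to X$ (coming from \cite{yoshikawa20fano}) that is of Fano type over its own Albanese and carries a compatible int-amplified endomorphism $g$; one then shows, using the splitting hypothesis, that the square is Cartesian (Lemma~\ref{lem:fiber product toric}) and that $\mathrm{Alb}(Y)$ is a point (Lemma~\ref{lem:trivial albanese}). Both of these rest on the key Lemma~\ref{lem:trivializable}: if $f^*M\cong\sO_X$ then $M\cong\sO_X$, proved by a multiplicity/Euler-characteristic divisibility trick rather than by pulling back a specific $L$ from the Albanese as you propose. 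Once $Y$ is of Fano type it is rationally connected, the cover is trivial, and $X$ itself is of Fano type. Filling in these two points --- the citation to the Kunz-type theorem, and the Fano-type argument in place of mere rational connectedness --- your plan becomes the paper's proof.
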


\begin{rmk}
If $f$ is a multiplication map, then the ``only if'' part of Theorem \ref{main theorem} essentially follows from the argument in \cite{thomsen}.
Toric varieties, however, have other endomorphisms in general. 
For example, a projective space has many endomorphisms which do not preserve any toric structure.
\end{rmk}

We briefly explain how to prove Theorem \ref{main theorem}.
In the proof of the ``if'' part, we first prove that
every \'etale cover $\widetilde{X}$ of $X$ equivariant under $f$ has the trivial Albanese variety.
It then follows from \cite[Theorem 1.3]{yoshikawa20fano} that $X$ is of Fano type, and in particular, the Picard group of $X$ is free and the Cox ring $R$ of $X$ is finitely generated by \cite{bchm}.
By an argument similar to \cite{achinger}, we show that the endomorphism $\phi$ on $R$ induced by $f$ is flat, using the property that $f_*L$ is a direct sum of line bundles on $X$ for every line bundle $L$.
A generalization of Kunz's theorem \cite[Theorem 13.3]{aim} tells us that $R$ is a polynominal ring and therefore $X$ is toric by \cite[Theorem 2.10]{hu-keel}.
Next we prove the ``only if'' part and we assume that $X$ is toric.
Then the Cox ring $R$ of $X$ is a polynominal ring and the induced endomorphism $\phi$ on $R$ is flat.
Given a line bundle $L$ on $X$, we consider the graded $R$-module $E_L$ defined by
\[
E_L := \bigoplus_{M \in \Pic(X)} H^0(X,f_*L \otimes M).
\]
$E_L$ is a direct summand of $\phi_*R$ by the projection formula, so  it is a free $R$-module.
Since $f_*L$ is the sheaf associated to $E_L$, the vector bundle $f_*L$ is a direct sum of line bundles by \cite[Proposition 6.A.3]{cox}.

Throughout this paper, all varieties are defined over an algebraically closed field $k$ of characteristic zero and all morphisms between varieties are $k$-morphisms. We will freely use the standard notations in \cite{kollar-mori}.

\begin{ack}
The author wishes to express his gratitude to his supervisor Professor Shunsuke Takagi for his encouragement, valuable advice and suggestions. He is also
grateful to Kenta Sato, Sho Ejiri, Teppei Takamatsu, Tatsuro Kawakami, Professor De-Qi Zhang and Professor Piotr Achinger for their helpful comments and suggestions. 
This work was supported by the Program for Leading Graduate Schools, MEXT, Japan.
He was also supported by JSPS KAKENHI Grant number JP20J11886.
\end{ack}

\section{Endomorphism on Cox rings}
In this section, we recall the definition of Cox rings, and study properties of endomorphisms of them induced by endomorphisms of varieties. Lemma \ref{characterization of regularity} gives a characterization of toric varieties via endomorphisms.

\begin{defn}
Let $X$ be a normal projective variety such that $\Pic(X)$ is free.
We take line bundles $L_1, \ldots L_r$ which are a basis of $\Pic(X)$.
We define the {\em Cox ring} of $X$ to be
\[
\Cox(X) := \bigoplus_{v \in \Z^r} H^0(X,L^r)
\]
with the multiplication induced by the product of rational functions, where
\[
L^v := L_1^{\otimes n_1} \otimes \cdots \otimes L_r^{\otimes n_r}
\]
for $v=(n_1, \ldots , n_r)$.
The Cox ring of $X$ is naturally graded by the Picard group of $X$ and every homogeneous part is a $k$-algebra, so we regard $\Cox(X)$ as a graded $k$-algebra. 
Furthermore, $\Cox(X)$ is independent on the choice of basis.
\end{defn}

\begin{prop}\label{canonical maximal ideal}
Let $X$ be a normal projective variety such that $\Pic(X)$ is free with basis $L_1, \ldots L_r$.
Then 
\[
\mathbf{m} := \bigoplus_{v \in \Z^r \backslash \{0\}} H^0(X,L^v)
\]
is a graded maximal ideal of $\Cox(X)$, it will be called the {\rm canonical maximal ideal} of $\Cox(X)$.
\end{prop}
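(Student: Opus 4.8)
The plan is to verify directly that $\mathbf{m}$ is a graded ideal and that the quotient $\Cox(X)/\mathbf{m}$ is the ground field. Since $X$ is a projective variety over the algebraically closed field $k$, the degree-zero part of $\Cox(X)$ is $H^0(X,L^0)=H^0(X,\mathcal{O}_X)=k$, and the quotient map $\Cox(X)\to\Cox(X)/\mathbf{m}$ kills every graded piece of nonzero degree; so once $\mathbf{m}$ is known to be an ideal we immediately get $\Cox(X)/\mathbf{m}\cong k$ as $k$-algebras, which is a field, hence $\mathbf{m}$ is maximal, and it is graded by construction. The only point with genuine content is therefore that $\mathbf{m}$ absorbs multiplication, and the single nonformal ingredient is the vanishing
\[
H^0(X,L^v)\cdot H^0(X,L^{-v})=0\ \subseteq\ H^0(X,\mathcal{O}_X)\qquad\text{for every }v\in\Z^r\setminus\{0\}.
\]

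To prove this vanishing I would argue geometrically. Suppose $s\in H^0(X,L^v)$ and $t\in H^0(X,L^{-v})$ are both nonzero. Their product $s\cdot t$ is a nonzero section of $L^v\otimes L^{-v}=\mathcal{O}_X$ (nonzero because, on the integral variety $X$, multiplication of sections is multiplication of nonzero rational functions), hence a nonzero constant, and in particular nowhere vanishing. The zero divisors $Z(s)$ and $Z(t)$ are effective with $\mathcal{O}_X(Z(s))\cong L^v$ and $\mathcal{O}_X(Z(t))\cong L^{-v}$, and $Z(s)+Z(t)=Z(s\cdot t)=0$, so $Z(s)=Z(t)=0$; thus $L^v\cong\mathcal{O}_X$ in $\Pic(X)$, and since $L_1,\dots,L_r$ is a basis of $\Pic(X)$ this forces $v=0$, a contradiction. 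Equivalently, one uses that an effective divisor linearly equivalent to $0$ on a projective variety is $0$, so $L^v$ and $L^{-v}$ cannot both be effective unless $v=0$; the same computation can also be carried out with rational functions $g,h$ satisfying $\mathrm{div}(g)+D\ge 0$ and $\mathrm{div}(h)-D\ge 0$ for a divisor $D$ representing $v$, noting that $\mathrm{div}(gh)\ge 0$ forces $\mathrm{div}(g)+D=0$.

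Granting the vanishing, $\mathbf{m}$ is plainly a graded $k$-subspace of $\Cox(X)$, so by bilinearity it suffices to check that $a\cdot b\in\mathbf{m}$ whenever $a\in H^0(X,L^v)$ is homogeneous of arbitrary degree $v$ and $b\in H^0(X,L^w)$ is homogeneous of degree $w\neq 0$. The product lies in $H^0(X,L^{v+w})$: if $v+w\neq 0$ this graded piece is a summand of $\mathbf{m}$ and we are done, while if $v+w=0$ then $w=-v\neq 0$ and the displayed vanishing gives $a\cdot b=0\in\mathbf{m}$. Hence $\mathbf{m}$ is a graded ideal, and the conclusion follows as explained in the first paragraph. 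The main obstacle is thus isolating and proving the vanishing across the degree-$0$ slot; everything else is formal, provided one keeps in mind that the multiplication on $\Cox(X)$ is the one induced by multiplication of rational sections, which is what legitimizes the divisor-theoretic step.
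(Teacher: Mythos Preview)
Your proof is correct and follows essentially the same approach as the paper's: both reduce to the observation that if $L^v$ and $L^{-v}$ each admit a nonzero global section then $L^v$ is trivial, and both conclude maximality from $\Cox(X)/\mathbf{m}\cong H^0(X,\mathcal{O}_X)\cong k$. You simply spell out the divisor-theoretic justification for that observation more explicitly than the paper does.
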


\begin{proof}
First we prove that $\mathbf{m}$ is a graded ideal.
Taking homogeneous elements $\alpha \in \Cox(X)$ and $\beta \in \mathbf{m}$, then $\alpha$ and $\beta$ are global sections of some line bundles $L_\alpha$ and $L_\beta$, respectively.
If $\alpha \beta \in H^0(X,L_\alpha \otimes L_\beta)$ is not contained in $\m$, then we have $L_\alpha \otimes L_\beta \simeq \sO_X$.
If $\alpha$ and $\beta$ are non-zero global sections, then $L_\alpha$ and $L_\beta$ are trivial, and it contradicts to $\beta \in \m$.
Hence $\m$ is a graded ideal of $\Cox(X)$.
Furthermore, we have
\[
\Cox(X)/\m \simeq H^0(X,\sO_X) \simeq k,
\]
so $\m$ is a maximal ideal.
\end{proof}




\begin{lem}\label{polynominal}
Let $X$ be a normal projective variety such that $\Pic(X)$ is free.
Assume that $\Cox(X)$ is finitely generated $k$-algebra.
Then $X$ is smooth toric if and only if $\Cox(X)_\m$ is regular local ring, where $\Cox(X)_\m$ is the localization of the Cox ring by the canonical maximal ideal.
\end{lem}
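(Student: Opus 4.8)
The plan is to show that $\Cox(X)_\m$ is a regular local ring if and only if $\Cox(X)$ is a polynomial $k$-algebra, and then to invoke Cox's description of the total coordinate ring of a toric variety together with \cite[Theorem~2.10]{hu-keel}. The ``only if'' direction is immediate: if $X$ is smooth toric then $\Cox(X)$ is a polynomial ring $k[x_1,\dots,x_m]$ graded by $\Pic(X)$, in which the canonical maximal ideal $\m$ of Proposition~\ref{canonical maximal ideal} is the irrelevant ideal $(x_1,\dots,x_m)$, and the localization of a polynomial ring at a maximal ideal is regular.

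For the converse, write $R:=\Cox(X)$, a finitely generated integral domain graded by $\Pic(X)\cong\Z^r$ with $R_0=H^0(X,\sO_X)=k$. The first step is to promote regularity of $R_\m$ to regularity of $R$. The support $S=\{v\in\Z^r:R_v\neq 0\}$ is a submonoid of $\Z^r$, and by the product-of-sections argument from the proof of Proposition~\ref{canonical maximal ideal} (a section of a line bundle that is a unit in $R$ is nowhere vanishing, hence trivializes the bundle) the only unit of $S$ is $0$; a submonoid of $\Z^r$ with trivial unit group spans a pointed cone, so there is a homomorphism $\lambda\colon\Z^r\to\Z$ strictly positive on $S\setminus\{0\}$ (alternatively, $\Eff(X)$ is pointed because $X$ is projective). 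Coarsening the grading along $\lambda$ makes $R$ a finitely generated $\mathbb{Z}_{\ge 0}$-graded $k$-algebra with $R_0=k$ whose irrelevant ideal is precisely $\m$. The non-regular locus of $\Spec R$ is closed and stable under the $\mathbb{G}_m$-action determined by $\lambda$, and $V(\m)$ lies in the closure of every $\mathbb{G}_m$-orbit because $\lambda$ has positive weights, so regularity of $R_\m$ forces $R$ to be regular. The second step is that a regular finitely generated $\mathbb{Z}_{\ge 0}$-graded $k$-algebra $R$ with $R_0=k$ is a polynomial ring: as $\dim_k \m/\m^2=\dim R_\m=\dim R=:n$, a homogeneous lift $x_1,\dots,x_n\in\m$ of a basis of $\m/\m^2$ generates $\m$ by graded Nakayama, hence generates $R$ as a $k$-algebra, and the resulting graded surjection $k[X_1,\dots,X_n]\twoheadrightarrow R$ onto the domain $R$ of the same Krull dimension has zero kernel.

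Thus $\Cox(X)$ is a polynomial ring, and $X$ is toric by \cite[Theorem~2.10]{hu-keel}. It remains to see that $X$ is smooth, and here is where I expect the real difficulty: one must explain why polynomiality of the \emph{Picard}-graded Cox ring $\Cox(X)$ detects smoothness, whereas the full class-group-graded total coordinate ring $k[x_\rho:\rho\in\Sigma(1)]$ of a toric variety is always a polynomial ring. I would argue by identifying $\Cox(X)$ with the ring of invariants of $k[x_\rho:\rho\in\Sigma(1)]$ under the diagonalizable group $\Hom(\operatorname{Cl}(X)/\Pic(X),\mathbb{G}_m)$ acting through the characters $[D_\rho]$, and checking that a singular cone $\sigma$ of the fan yields, on the affine chart $X_\sigma$, a faithful action of a nontrivial subgroup of this group without pseudo-reflections on the variables $x_\rho$, $\rho\in\sigma$; by the Chevalley--Shephard--Todd theorem this is incompatible with the invariant ring being polynomial. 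Hence $\operatorname{Cl}(X)=\Pic(X)$ and every cone of the fan is smooth, so $X$ is smooth toric. The passage to the $\mathbb{Z}_{\ge 0}$-grading and the two commutative-algebra steps of the second paragraph are, by contrast, routine.
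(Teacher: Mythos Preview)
Your argument is correct and follows the same two-step skeleton as the paper, but where the paper dispatches the lemma in two citations---\cite[Theorem~2.10]{hu-keel} for ``$X$ smooth toric $\Leftrightarrow$ $\Cox(X)$ is a polynomial ring'' and \cite[Lemma~5]{achinger} for ``$\Cox(X)$ polynomial $\Leftrightarrow$ $\Cox(X)_\m$ regular''---you unpack both. Your second paragraph (coarsen to a $\Z_{\ge 0}$-grading via a functional positive on the effective monoid, propagate regularity from $R_\m$ to all of $R$ by the $\mathbb{G}_m$-orbit-closure argument, then deduce polynomiality by counting generators against Krull dimension) is precisely the content of Achinger's Lemma~5, so here you differ from the paper only in exposition. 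The genuine divergence is the smoothness step: you correctly notice that Hu--Keel as usually stated yields only ``toric'', and you supply a separate Chevalley--Shephard--Todd argument whose pseudo-reflection check you leave open. A shorter route, implicit in the paper's reading of Hu--Keel, is that their construction realizes $X$ as a GIT quotient of $\A^n=\Spec\Cox(X)$ by the connected torus $\Hom(\Pic(X),\mathbb{G}_m)$, and the class group of such a toric quotient is the character lattice of the acting torus, namely $\Pic(X)$ itself; hence $\operatorname{Cl}(X)=\Pic(X)$, which for a toric variety is equivalent to every cone of the fan being smooth. This bypasses CST entirely and explains why the paper is content to fold ``smooth'' into the Hu--Keel citation.
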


\begin{proof}
By \cite[Theorem 2.10]{hu-keel}, $X$ is smooth toric if and only if $\Cox(X)$ is polynominal ring over $k$.
By \cite[Lemma 5]{achinger}, it equivalent to the regularity of $\Cox(X)_\m$.

\end{proof}

\begin{prop}\label{endomorphism of cox ring}
Let $X$ be a normal projective variety such that $\Pic(X)$ is free.
Let $f \colon X \to X$ be a finite morphism.
Then $f$ induces the injective $k$-algebra homomorphism $\phi \colon \Cox(X) \to \Cox(X)$ satisfying the following properties:
\\ \noindent{\rm (1)} $\phi$ is finite if $\Cox(X)$ is finitely generated $k$-algebra, and
\\ \noindent{\rm (2)} $\phi^{-1}(\m)=\m$ for the canonical maximal ideal $\m$.
\end{prop}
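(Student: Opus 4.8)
The plan is to build $\phi$ by hand from pullback of sections, deduce (2) from the resulting gradedness together with injectivity of $f^{*}$ on $\Pic(X)$, and reduce (1) to a statement about the affine cone $\widehat{X}=\Spec\Cox(X)$ that is forced by $f$ being a finite \emph{morphism}. For the construction I would fix divisors $D_{1},\dots,D_{r}$ with $\sO_{X}(D_{i})\simeq L_{i}$ and realise $\Cox(X)=\bigoplus_{v}R_{v}$ with $R_{v}=\{g\in k(X)^{\times}:\Div(g)+D_{v}\geq 0\}\cup\{0\}$, $D_{v}=\sum n_{i}D_{i}$, multiplication induced from $k(X)$. As $f$ is finite, hence dominant, it induces $f^{*}\colon k(X)\hookrightarrow k(X)$; let $A\colon\Z^{r}\to\Z^{r}$ be the matrix of $f^{*}$ on $\Pic(X)$, so $f^{*}L_{i}\simeq L^{Ae_{i}}$, choose $h_{i}\in k(X)^{\times}$ with $f^{*}D_{e_{i}}=D_{Ae_{i}}+\Div(h_{i})$, and put $h_{v}=\prod_{i}h_{i}^{n_{i}}$, so that $f^{*}D_{v}=D_{Av}+\Div(h_{v})$ and $h_{v+w}=h_{v}h_{w}$. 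I would then define $\phi$ on $R_{v}$ by $g\mapsto h_{v}f^{*}g$: the identity $\Div(h_{v}f^{*}g)+D_{Av}=f^{*}\bigl(\Div(g)+D_{v}\bigr)\geq 0$ shows $\phi(R_{v})\subseteq R_{Av}$, while $h_{v+w}=h_{v}h_{w}$, $h_{0}=1$, multiplicativity of $f^{*}$ and $f^{*}|_{k}=\id$ make $\phi$ a $k$-algebra homomorphism, graded along $A$ and restricting to the identity on $R_{0}=k$. Injectivity of $\phi$ follows from injectivity of each $\phi|_{R_{v}}$ (because $f^{*}$ is injective on $k(X)$) together with injectivity of $A$, shown below. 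Different choices of the $D_{i},h_{i}$ alter $\phi$ only by a graded automorphism of $\Cox(X)$, in keeping with $\Cox(X)$ being itself defined only up to such an automorphism; I write $\widehat{f}=\Spec\phi\colon\widehat{X}\to\widehat{X}$.

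For (2), I would first note that $A$ is injective: $\Pic(X)$ is free, hence has no nonzero divisible element, so $\Pic^{0}(X)=0$ and $\Pic(X)=\NS(X)$, and for the finite surjective morphism $f$ one has $f_{*}f^{*}=\deg(f)\cdot\id$ on $\NS(X)$, forcing $f^{*}$, hence $A$, to be injective. Therefore $v\neq 0\Rightarrow Av\neq 0$, so $\phi(\m)\subseteq\bigoplus_{v\neq 0}R_{Av}\subseteq\m$, giving $\m\subseteq\phi^{-1}(\m)$; conversely $\phi$ fixes $R_{0}$ pointwise, so if $g=\sum_{v}g_{v}$ with $g_{0}\neq 0$ then the degree-$0$ component of $\phi(g)$ is $g_{0}\neq 0$, hence $\phi(g)\notin\m$, giving $\phi^{-1}(\m)\subseteq\m$. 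Thus $\phi^{-1}(\m)=\m$.

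The content is (1). Assume $R:=\Cox(X)$ is a finitely generated $k$-algebra. Because the pseudo-effective cone $\overline{\Eff}(X)\subseteq\NS(X)_{\R}$ is strongly convex, $R$ and $\phi(R)$ are finitely generated $\Z^{r}$-graded $k$-algebras supported on a pointed cone, with degree-$0$ part $k$; by the graded Nakayama lemma, $R$ is a finite $\phi(R)$-module if and only if $R/\phi(\m)R$ is finite-dimensional over $k$, i.e.\ if and only if $\widehat{f}^{-1}(\hat{0})=\{\hat{0}\}$ set-theoretically, where $\hat{0}=V(\m)\in\widehat{X}$ is the vertex. Let $\widehat{X}_{0}\subseteq\widehat{X}$ be the characteristic space: the $G$-invariant open subset ($G=\Spec k[\Z^{r}]$), with $\operatorname{codim}_{\widehat{X}}(\widehat{X}\setminus\widehat{X}_{0})\geq 2$, carrying a good quotient $p\colon\widehat{X}_{0}\to X$ that is a $G$-torsor over a dense open of $X$ whose complement has codimension $\geq 2$. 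Since $\phi$ is pullback of sections along the \emph{morphism} $f$, the rational self-map of $X$ induced by $\phi$ is $f$ and in particular has empty indeterminacy; this translates to $\widehat{f}(\widehat{X}_{0})\subseteq\widehat{X}_{0}$, hence $\widehat{f}^{-1}(\hat{0})\subseteq\widehat{X}\setminus\widehat{X}_{0}$. To upgrade this to $\widehat{f}^{-1}(\hat{0})=\{\hat{0}\}$ I would use that $f$ is \emph{finite}: over a $G$-invariant open subset whose image in $X$ has complement of codimension $\geq 2$, the map $\widehat{f}$ is a morphism of $G$-torsors lying over $f$ and equivariant for the isogeny $\psi\colon G\to G$ dual to $A$ (finite and surjective, as $A$ is injective), so the valuative criterion applied to $f$ (which is proper) and to $\psi$ (which is finite) shows $\widehat{f}$ is finite there; then $\widehat{f}^{-1}(\hat{0})$, being $G$-invariant, conical, and contained in the codimension-$\geq 2$ locus $\widehat{X}\setminus\widehat{X}_{0}$, is zero-dimensional, and since $\hat{0}$ is the unique $G$-fixed point and lies in the closure of every $G$-orbit (again by strong convexity of $\overline{\Eff}(X)$), this forces $\widehat{f}^{-1}(\hat{0})=\{\hat{0}\}$. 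Hence $\phi$ is finite, which is (1).

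I expect the last move — from ``$\widehat{f}$ finite over a big open subset'' to ``$\widehat{f}$, equivalently $\phi$, finite'', i.e.\ ruling out that $\widehat{f}$ contracts some positive-dimensional subvariety of $\widehat{X}\setminus\widehat{X}_{0}$ onto the vertex — to be the main obstacle. This is the place where the description of $X$ as a GIT quotient of $\widehat{X}_{0}$ and the gap between ``morphism'' and ``finite morphism'' genuinely enter, and making it precise will require care with the valuative criterion and with the torsor structure over the various codimension-$\geq 2$ loci.
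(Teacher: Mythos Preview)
Your construction of $\phi$ and your proof of (2) are correct and match the paper's argument; the paper is simply terser, observing that $f$ gives injections $H^{0}(X,L)\hookrightarrow H^{0}(X,f^{*}L)$ on each graded piece and that for nonzero homogeneous $s\in H^{0}(X,L)$ one has $\phi(s)\in\m$ iff $f^{*}L\not\simeq\sO_{X}$ iff $L\not\simeq\sO_{X}$. Your argument that $f^{*}$ is injective on $\Pic(X)=\NS(X)$ via $f_{*}f^{*}=\deg(f)\cdot\id$ is exactly what underlies this, and the paper also uses its consequence (finite cokernel of $f^{*}$ on $\Pic(X)$) without spelling it out.

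For (1), your route through the geometry of $\widehat{X}=\Spec\Cox(X)$ is far more involved than necessary, and the gap you flag at the end is genuine. The irrelevant locus $\widehat{X}\setminus\widehat{X}_{0}$ has codimension $\geq 2$ but can still be high-dimensional, and neither $G$-invariance nor conicality forces a closed subset of it to collapse to $\{\hat{0}\}$; likewise, finiteness of $\widehat{f}$ over a big open does not automatically extend across that locus. The paper bypasses all of this by factoring
\[
\phi\colon\ \Cox(X)\ \longrightarrow\ \bigoplus_{L\in\Pic(X)}H^{0}(X,f^{*}L)\ \hookrightarrow\ \Cox(X).
\]
The first map is finite by a result of Okawa on section rings under surjective morphisms of projective varieties, applied to $f\colon X\to X$. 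The second inclusion is finite because $f^{*}\Pic(X)$ has finite index in $\Pic(X)$---your injectivity of $A$, on a free abelian group of finite rank---together with the hypothesis that $\Cox(X)$ is a finitely generated $k$-algebra. So (1) reduces to a single citation plus the rank argument you already gave; no analysis of the irrelevant locus is needed.
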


\begin{proof}
$f$ induces an injetive $k$-module homomorphism $H^0(X,L) \to H^0(X,f^*L)$ for any line bundle $L$.
Hence it induces the injective $k$-algebra homomorphism $\phi \colon \Cox(X) \to \Cox(X)$ such that the restriction of $\phi$ to $H^0(X,L)$ coincides with the above homomorphism.

By \cite[Theorem 3.1]{Okawa}, the submodule
\[
\bigoplus_{L \in \Pic(X)} H^0(X,f^*L)
\]
is a finite $\Cox(X)$-module.
Since $f^* \colon \Pic(X) \to \Pic(X)$ has the finite cokernel, we obtain the first assertion.

Next we prove the second assertion.
Let $s \in H^0(X,L)$ be a non-zero global section of a line bundle $L$. 
Then $\phi(s)$ is contained in $\m$ if and only if $f^*L$ is not trivial.
As $L$ has a non-zero global section, $f^*L$ is trivial if and only if $L$ is trivial.
Hence we have $\phi^{-1}(\m)=\m$.
\end{proof}

\begin{lem}\label{characterization of regularity}
Let $X$ be a normal projective variety such that $\Pic(X)$ is free.
Assume that $\Cox(X)$ is a finitely generated $k$-algebra.
Let $f \colon X \to X$ be a finite morphism.
Assume that if $f^*L \simeq L$ for a line bundle $L$, then $L$ is numerically trivial.
Then $X$ is smooth toric if and only if  the induced homomorphism $\phi \colon \Cox(X) \to \Cox(X)$ is flat.
\end{lem}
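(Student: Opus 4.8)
The plan is to reduce to Lemma~\ref{polynominal}: as $\Cox(X)$ is a finitely generated $k$-algebra, $X$ is smooth toric if and only if $\Cox(X)_\m$ is a regular local ring, so it is enough to prove that $\Cox(X)_\m$ is regular if and only if $\phi$ is flat. Write $R:=\Cox(X)$ and recall from Proposition~\ref{endomorphism of cox ring} that $\phi\colon R\to R$ is injective, module-finite, and satisfies $\phi^{-1}(\m)=\m$.

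Assume first that $X$ is smooth toric. Then, by Lemma~\ref{polynominal} (equivalently \cite[Theorem 2.10]{hu-keel}), $R$ is a polynomial ring, hence regular, so in particular Cohen--Macaulay. For a prime $\mathfrak q\subseteq R$ of the target and $\mathfrak p:=\phi^{-1}(\mathfrak q)$, the ring $R_{\mathfrak p}$ is regular, $R_{\mathfrak q}$ is Cohen--Macaulay, the fibre $R_{\mathfrak q}\otimes_{R_{\mathfrak p}}\kappa(\mathfrak p)$ is zero-dimensional since $\phi$ is finite, and $\dim R_{\mathfrak q}=\dim R_{\mathfrak p}$ since $\phi$ is a finite injective map of normal domains (so heights are preserved). ``Miracle flatness'' then gives that $R_{\mathfrak q}$ is flat over $R_{\mathfrak p}$; letting $\mathfrak q$ vary, $\phi$ is flat. (The numerical hypothesis is not needed in this direction.)

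Conversely, assume $\phi$ is flat. Since $\phi$ is finite with $\phi^{-1}(\m)=\m$, it localizes to a module-finite flat local endomorphism $\phi_\m$ of the Noetherian local ring $R_\m$, and I would then invoke the generalization of Kunz's theorem \cite[Theorem 13.3]{aim}, which guarantees that a Noetherian local ring carrying a \emph{contracting} endomorphism of finite flat dimension is regular; granting that $\phi_\m$ is contracting, this makes $R_\m$ regular and so, by Lemma~\ref{polynominal}, $X$ smooth toric.

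The point where the numerical hypothesis enters, and the step I expect to be the main obstacle, is the verification that $\phi_\m$ is contracting, i.e.\ that $\phi^i(\m)\subseteq\m^2$ for some $i\ge 1$ (this suffices, as then $\phi_\m^i(\m R_\m)\subseteq\m^2 R_\m$). Fix finitely many homogeneous $k$-algebra generators of $R$; their degrees form a finite set $D\subseteq\Pic(X)\setminus\{0\}$, and a monomial count shows $H^0(X,L^w)\subseteq\m^2$ whenever $w\ne 0$ and $w\notin D$. Since $\phi$ carries $H^0(X,L^v)$ into $H^0(X,L^{\psi v})$ with $\psi:=f^*$ on $\Pic(X)$, after $i$ iterations it carries $H^0(X,L^v)$ into $H^0(X,L^{\psi^i v})$, so it is enough to find $i$ with $\psi^i(\Pic(X))\cap D=\emptyset$. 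Here $\psi$ is injective: if $f^*L\simeq\sO_X$, then applying $f_*$ and the projection formula gives $(f_*\sO_X)\otimes L\simeq f_*\sO_X$, whence $L^{\otimes\deg f}\simeq\sO_X$ and $L\simeq\sO_X$ because $\Pic(X)$ is free. Moreover $R$ being finitely generated forces the monoid of effective classes, and hence the cone $\overline{\Eff}(X)$, to be rational polyhedral, and $\psi$ maps $\overline{\Eff}(X)$ into itself; I expect that combining this with the assumption that $1$ is not an eigenvalue of $f^*$ on $\NS(X)$, via a Perron--Frobenius argument for cone-preserving operators, yields that $\psi$ is expanding enough that $\bigcap_{i\ge 0}\psi^i(\Pic(X))=0$, after which the finiteness of $D$ produces the desired $i$. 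Pinning down exactly which expansion property follows from the hypotheses — and checking it meets the requirements of \cite[Theorem 13.3]{aim} — is the delicate part of the argument.
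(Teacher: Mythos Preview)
Your overall strategy matches the paper's: the forward direction via miracle flatness, the converse via \cite[Theorem~13.3]{aim} and Lemma~\ref{polynominal}, and the reduction of ``contracting'' to the observation that a nonzero homogeneous element whose degree lies outside the finite set $D$ of generator-degrees already sits in $\m^2$. The gap is only in how you force the orbits $\psi^l(\lambda_j)$ to leave $D$. You ask for more than is needed---it is not necessary that $\psi^i(\Pic(X))\cap D=\emptyset$, only that for each $j$ some iterate $\psi^{l_j}(\lambda_j)$ exits $D$ (once $\phi^{l_j}(a_j)\in\m^2$ it stays there under further iteration because $\phi(\m)\subseteq\m$, so $l=\max_j l_j$ works)---and, more seriously, you misread the hypothesis: it does \emph{not} say that $1$ fails to be an eigenvalue of $f^*$ on $\NS(X)_\R$, only that no nonzero class of $\Pic(X)$ fixed by $f^*$ is numerically nontrivial. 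That is strictly weaker and does not by itself feed a Perron--Frobenius expansion argument, so the route you sketch does not close in the stated generality.

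The paper replaces this with a one-line pigeonhole. If some orbit $\{\psi^l(\lambda_j)\}_{l\ge0}$ never left the finite set $D$, two iterates would coincide, producing a class $\mu\in D$ with $(f^m)^*L_\mu\simeq L_\mu$ for some $m\ge1$; the hypothesis (applied to the iterate) makes $L_\mu$ numerically trivial, while the generator of degree $\mu$ is a nonzero section of $L_\mu$, forcing $L_\mu$ to be trivial and contradicting $\mu\ne0$. No spectral or cone-theoretic input is required.
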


\begin{proof}
If $X$ is smooth toric, then $\Cox(X)$ is a polynominal ring.
Since $\phi \colon \Cox(X) \to \Cox(X)$ is a finite morphism of regular rings, $\phi$ is flat.

Next, we assume that $\phi$ is flat.
By Proposition \ref{endomorphism of cox ring}, $\phi$ is finite and induces local endomorphism of $\Cox(X)_{\mathfrak{m}}$.
By \cite[Theorem 13.3]{aim}, it is enough to show that $\phi$ is contracting, that is, $\phi^e(\m) \subset \m^2$ for some $e$.

We denote $\Cox(X)$ by $R$ and the degree $\lambda$ part by $R_{\lambda}$ for all $\lambda \in \Pic(X)$.
We take a homogeneous generator $\{a_i \in R_{\lambda_i}\}_{i=1, \ldots ,m}$ of $\m$.
We take a homogeneous element $a \in R_{\lambda} \subset \mathfrak{m}$, then we have $a=h(a_1, \ldots , a_m)$ for some polynominal $h$ with $h(0, \ldots , 0)=0$.
Since we have $\phi^l(a) =h(\phi^l(a_1), \ldots , \phi^l(a_m))$, it is enough to show that $\phi^l(a_i)$ is contained in $\mathfrak{m}^2$ for all $i$ for some $l$.
We assume that $\phi(a_i)$ is not contained in $R_{\lambda_j}$ for $j=1, \ldots m$.
Since we have $\phi(a_i)= h_i(a_1, \ldots ,a_m)$ for some polynominal $h_i$ and $\phi(a_i)$ is homogeneous,
the vanishing degree of $h_i$ is grater than two, and in particular, $\phi(a_i) \in \mathfrak{m}^2$.
Hence it is enough to show that for some $i$, there exists a positive integer $l$ such that $\phi^l(a_i)$ is not contained in $R_{\lambda_j}$ for all $j=1, \ldots m$.
Otherwise, for some $i$, $(f^{l})^*L_{\lambda_i}$ is isomorphic to $L_{\lambda_i}$ for some $l$, where $L_{\lambda_i}$ is a line bundle corresponding to $\lambda_i$.
By the assumption of $f$, $L_{\lambda_i}$ is numerically trivial.
Since we have $0 \neq a_i \in H^0(X,L_{\lambda_i})$, $L_{\lambda_i}$ is trivial, so it contradicts to $\lambda_i \neq 0$.
\end{proof}

\section{Proof of the ``only if'' part}
Let $X$ be a smooth toric projective variety. Thomsen \cite{thomsen} proved that for every line bundle $L$ on $X$, $F_*L$ is a direct sum of line bundles, where $F$ is the absolute Frobenius.
In this section, we prove an analogue of this result for all finite endomorphisms which is not necessarily a toric morphism.
\begin{thm}\label{only if part}\textup {(cf. \cite[Theorem 1]{thomsen})}
Let $X$ be a smooth toric projective variety and $f \colon X \to X$ a finite morphism.
Then for every line bundle $L$ on $X$, $f_*L$ is a direct sum of line bundles.
\end{thm}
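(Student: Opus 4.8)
The plan is to exploit the machinery of Section 2, reducing the statement about $f_*L$ to a statement about the endomorphism $\phi$ of the Cox ring. Since $X$ is smooth toric, $\Pic(X)$ is free, $\Cox(X) =: R$ is a polynomial ring over $k$, and $f$ induces a finite injective $k$-algebra homomorphism $\phi \colon R \to R$ by Proposition \ref{endomorphism of cox ring}. Because $\phi$ is a finite morphism of regular rings, $\phi$ is flat; in particular $\phi_*R$ is a flat, hence (being finite) locally free, graded $R$-module. The point is that $f_*L$ can be recovered from a graded piece of $\phi_*R$.

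Concretely, I would introduce the graded $R$-module
\[
E_L := \bigoplus_{M \in \Pic(X)} H^0\bigl(X, f_*L \otimes M\bigr),
\]
with its $\Pic(X)$-grading, and observe two things. First, by the projection formula $f_*(L \otimes f^*M) \simeq f_*L \otimes M$, so
\[
E_L = \bigoplus_{M \in \Pic(X)} H^0\bigl(X, L \otimes f^*M\bigr),
\]
which is visibly a graded $R$-submodule of $\phi_*R$ — indeed it is the image of the component of $\phi_*R$ indexed by the degrees in $L + f^*\Pic(X)$, glued appropriately — and one checks it is a direct summand of $\phi_*R$ as a graded $R$-module, using that $f^*\colon \Pic(X)\to\Pic(X)$ has finite cokernel so that $L + f^*\Pic(X)$ is a coset selecting a direct-sum sub-lattice. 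Since $\phi_*R$ is a finitely generated flat, hence projective, graded module over the graded polynomial ring $R$, its direct summand $E_L$ is a finitely generated projective, hence (by the graded Quillen--Suslin phenomenon for $\Z^r$-graded polynomial rings, or directly) free, graded $R$-module. Second, $f_*L$ is the sheaf on $X$ associated to the graded module $E_L$ under the standard correspondence between sheaves on a toric (or Cox-ring) quotient and graded modules; a graded free module corresponds to a direct sum of the line bundles $L^v$. Hence $f_*L$ is a direct sum of line bundles.

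The key inputs are: flatness of $\phi$ (immediate from regularity of $R$), finiteness of the relevant Cox-ring module (Proposition \ref{endomorphism of cox ring}, via \cite{Okawa}), and the identification of $f_*L$ with the sheafification of $E_L$ together with the translation ``graded free $\Leftrightarrow$ direct sum of line bundles,'' which is \cite[Proposition 6.A.3]{cox}. I expect the main obstacle to be the bookkeeping in the second step: verifying carefully that $E_L$ is genuinely a \emph{direct summand} (not merely a submodule) of $\phi_*R$ as a graded $R$-module, and that the grading conventions make $E_L$ a module over $R = \Cox(X)$ in the sense required for \cite[Proposition 6.A.3]{cox}. This amounts to tracking the action of $\Pic(X)$ on the index sets and using that $[\Pic(X) : f^*\Pic(X)] < \infty$; once that is set up, the freeness and the sheafification are formal. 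An alternative, more hands-on route — following Thomsen's original combinatorial argument with the fan and the torus-equivariant structure — is available when $f$ is a toric morphism, but the Cox-ring argument has the advantage of covering arbitrary finite $f$ uniformly.
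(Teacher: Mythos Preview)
Your proposal is correct and follows essentially the same route as the paper: define the graded $R$-module $E_L=\bigoplus_{M\in\Pic(X)}H^0(X,f_*L\otimes M)$, use the projection formula and the coset decomposition of $\Pic(X)$ modulo $f^*\Pic(X)$ to exhibit $E_L$ as a direct summand of $\phi_*R$, conclude freeness from flatness of $\phi$, and invoke \cite[Proposition~6.A.3]{cox} to translate back to a splitting of $f_*L$. The only cosmetic difference is that the paper phrases the direct-summand step as a ``splitting injection'' $E_L\hookrightarrow\phi_*R$ rather than spelling out the coset argument, but the content is identical.
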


\begin{proof}
Since $X$ is smooth toric, $R:=\Cox(X)$ is a polynominal ring with finite variables.
$f$ induces the endomorphism $\phi$ of $R$ as in Proposition \ref{endomorphism of cox ring}.
Since $\phi$ is a finite homomorphism of regular rings, $\phi$ is flat.
Thus $\phi_*R$ is flat and finite graded $R$-module.
Let $M$ be a line bundle on $X$.
We define a graded $R$-module $E_M$ by
\[
E_M := \bigoplus_{L \in \Pic(X)} H^0(X,f_*M \otimes L).
\]
Then we have a splitting injection
\[
E_M \simeq \phi_*\bigoplus_{L \in \Pic(X)} H^0(X,M \otimes f^*L) \to \phi_*R.
\]
Since $\phi_*R$ is flat, $E_M$ is a flat graded $R$-module, so $E_M$ is free $R$-module.
Hence we have $E_M \simeq R(\lambda_1) \oplus \cdots \oplus R(\lambda_d)$, where $\lambda_i \in \Pic(X)$ and $R(\lambda_i)$ is the shift of $R$.
By \cite[Proposition 6.A.3]{cox}, $f_*M$ is isomorphic to the sheaf associated to $E_M$, so we have
\[
f_*L \simeq L_{\lambda_1} \oplus \cdots \oplus L_{\lambda_d}.
\]

\end{proof}

\section{Proof of the ``if'' part}
In this section, we prove an analogue of Achinger's result for int-amplified endomorphisms, which are generalization of polarized endomorphisms.
It is the main difference from Achinger's proof that Lemma 4.5 and Lemma 4.6.

\begin{defn}
Let $X$ be a normal projective variety and $f \colon X \to X$ a finite endomorphism.
$f$ is called by \emph{int-amplified} if $f^*H-H$ is ample for some ample Cartier divisor $H$ on $X$.  
\end{defn}

\begin{lem}\label{numerically trivial}
Let $X$ be a normal projective variety admitting an int-amplified endomorphism $f$.
Let $L$ be a line bundle with $f^*L \simeq L$.
Then $L$ is numerically trivial.
\end{lem}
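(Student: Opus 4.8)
The plan is to diagonalize the action of $f^*$ on the Néron–Severi space and use the standard fact that int-amplified endomorphisms have all eigenvalues of absolute value $>1$ on $N^1(X)_{\mathbb{R}}$. Concretely, the pullback $f^* \colon N^1(X)_{\mathbb{R}} \to N^1(X)_{\mathbb{R}}$ is a linear map, and the int-amplified hypothesis says $f^*H - H$ is ample for some ample $H$; I would first record the consequence that every (complex) eigenvalue $\mu$ of $f^*$ on $N^1(X)_{\mathbb{R}}$ satisfies $|\mu| > 1$. This is essentially \cite[Theorem 1.1]{meng-zhang-char} (or can be extracted from the cone-theoretic argument: if $f^*$ preserved a nonzero class of absolute value $\le 1$ in a suitable sense, one contradicts ampleness of $f^*H-H$ by iterating and using that the nef cone is $f^*$-invariant and strict).

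**Next**, suppose $L$ is a line bundle with $f^*L \simeq L$. Let $\ell \in N^1(X)_{\mathbb{R}}$ be its numerical class. Then $f^*\ell = \ell$, so $\ell$ lies in the eigenspace of $f^*$ for the eigenvalue $1$. But by the previous paragraph, $f^*$ has no eigenvalue of absolute value $\le 1$, and in particular no eigenvalue equal to $1$; hence that eigenspace is $\{0\}$, forcing $\ell = 0$. Thus $L$ is numerically trivial, which is exactly the claim.

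**The main obstacle** is making the eigenvalue statement self-contained rather than a citation: one must be careful that $f^*$ need not be diagonalizable, so "every eigenvalue has $|\mu|>1$" should be argued via the spectral radius of the inverse operator, or via the fact that the dual map $f_*$ contracts the (closed, full-dimensional, salient) pseudoeffective cone into its interior after twisting — a Perron–Frobenius type input. A clean route: since $f^*H - H$ is ample, for every nef nonzero class $\alpha$ we get $(f^*)^n \alpha \cdot H^{d-1} \ge (1+\epsilon)^n \alpha \cdot H^{d-1}$ for a fixed $\epsilon>0$ and all $n$ (using nefness of iterated pullbacks and that $f^*H \ge (1+\epsilon)H$ numerically in the sense that the difference is nef); this shows no nonzero nef class is fixed, and since $N^1$ is spanned by nef classes (differences of ample classes) while the fixed subspace of $f^*$, if nonzero, would be spanned by its nef elements — actually one should intersect the fixed subspace with the ample cone's closure — a fixed nef nonzero class is excluded, giving the result.

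**I would present** the proof by citing the eigenvalue estimate for int-amplified endomorphisms (available in the literature, e.g. \cite{meng-zhong}) to keep it short: $f^*L \simeq L$ implies the numerical class is fixed by $f^*$, hence zero since $1$ is not an eigenvalue of $f^*$ on $N^1(X)_{\mathbb{R}}$, hence $L \equiv 0$.
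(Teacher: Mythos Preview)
Your proposal is correct and matches the paper's approach: the paper's proof is a one-line citation of \cite[Theorem 3.3]{meng17}, which is precisely the eigenvalue characterization of int-amplified endomorphisms (all eigenvalues of $f^*$ on $N^1(X)_{\mathbb{R}}$ have modulus $>1$) that you invoke. Your deduction---$f^*L\simeq L$ forces the numerical class of $L$ to lie in the $1$-eigenspace of $f^*$, which is zero---is exactly the intended application.
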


\begin{proof}
It follows from \cite[Theorem 3.3]{meng17}.
\end{proof}

\begin{lem}{\textup{(cf. \cite[Lemma2]{achinger})}}\label{lem:trivializable}
Let $X$ be a normal projective variety admitting an endomorphism $f$.
We assume that there exists a subgroup $\Lambda \subset \mathrm{Pic}(X)$ containing an ample line bundle such that for every line bundle $L \in \Lambda$, $f_*L \simeq L_1 \oplus \cdots \oplus L_d$ for some line bundles $L_1, \ldots L_d$ contained in $\Lambda$.
If a line bundle $M$ satisfies $f^*M \simeq \mathcal{O}_X$, then $M$ is trivial. 
\end{lem}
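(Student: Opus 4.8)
The plan is first to show that $M$ is torsion and in fact lies in $\Lambda$, and then to upgrade torsion to triviality. Since $f^{*}M\simeq\sO_X$, the projection formula gives $f_{*}\mathcal F\otimes M\simeq f_{*}(\mathcal F\otimes f^{*}M)\simeq f_{*}\mathcal F$ for every coherent sheaf $\mathcal F$. Applying this to the ample line bundle $A\in\Lambda$ provided by the hypothesis and writing $f_{*}A\simeq\bigoplus_{i}L_{i}$ with $L_{i}\in\Lambda$, we get $\bigoplus_{i}L_{i}\simeq\bigoplus_{i}(L_{i}\otimes M)$. Since line bundles are indecomposable coherent sheaves and $X$ is proper, the Krull--Schmidt theorem forces the multisets $\{L_{i}\}$ and $\{L_{i}\otimes M\}$ to coincide; hence $\otimes M$ permutes the finite set $\{L_{1},\dots,L_{d}\}$, so $M^{\otimes d!}\simeq\sO_X$ and $M$ is torsion. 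Running the same argument for $f_{*}\sO_X\simeq\bigoplus_{j}N_{j}$ gives $\{N_{j}\}=\{N_{j}\otimes M\}$; as $\sO_X$ occurs among the $N_{j}$ — it is the summand of $f_{*}\sO_X$ carrying the unit section (in characteristic zero it splits off via the normalized trace, or directly: that summand has a nowhere-vanishing section because the $f$-pullback of its vanishing divisor is both effective and linearly trivial) — we conclude $M=\sO_X\otimes M\in\{N_{j}\}\subset\Lambda$, and in fact the whole cyclic group $\sO_X,M,\dots,M^{\otimes(N-1)}$ lies among the $N_{j}$, where $N=\ord M$.

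Suppose toward a contradiction that $M\not\simeq\sO_X$, i.e.\ $N\ge 2$. The next step is to locate a cyclic cover inside $f$. From $H^{0}(X,f_{*}\sO_X)=k$ one sees that only the summand $\sO_X$ among the $N_{j}$ has nonzero sections, so the multiplication $M^{\otimes i}\otimes M^{\otimes j}\to f_{*}\sO_X$ can have a nonzero component only into the summand $M^{\otimes(i+j)}$ (indices mod $N$); this component is nonzero because $f_{*}\sO_X$ is reduced (generically a field extension of $k(X)$). Consequently $\mathcal B:=\bigoplus_{i=0}^{N-1}M^{\otimes i}$ is an $\sO_X$-subalgebra of $f_{*}\sO_X$: it is the cyclic cover algebra attached to $M$ with nowhere-vanishing defining section, so $\pi\colon Y:=\Spec_{X}\mathcal B\to X$ is finite étale of degree $N$, with $Y$ connected ($H^{0}(\sO_Y)=H^{0}(\mathcal B)=k$) and normal, and $\pi^{*}M\simeq\sO_Y$. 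The inclusion $\mathcal B\hookrightarrow f_{*}\sO_X$ yields a factorization $f=\pi\circ\sigma$ with $\sigma\colon X\to Y$ finite.

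The hypothesis then descends to $Y$. For $L\in\Lambda$, the sheaf $\pi_{*}(\sigma_{*}L)=f_{*}L$ is a direct sum of line bundles in $\Lambda$; pulling back along the Galois étale morphism $\pi$ and using $\pi^{*}\pi_{*}\mathcal G\simeq\bigoplus_{h\in\mathrm{Gal}(Y/X)}h^{*}\mathcal G$ together with $h^{*}\pi^{*}=\pi^{*}$, the Krull--Schmidt theorem on $Y$ shows that $\sigma_{*}L$ is itself a direct sum of line bundles, all in $\pi^{*}\Lambda$. With $g:=\sigma\circ\pi\colon Y\to Y$ and $\mathcal N=\pi^{*}L\in\pi^{*}\Lambda$ we get $g_{*}\mathcal N=\sigma_{*}\pi_{*}\pi^{*}L\simeq\bigoplus_{i}\sigma_{*}(L\otimes M^{\otimes i})$, again a direct sum of line bundles in $\pi^{*}\Lambda$; and $\pi^{*}\Lambda$ contains the ample $\pi^{*}A$. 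Thus $(Y,g,\pi^{*}\Lambda)$ again satisfies all the hypotheses of the lemma. Since $\deg\pi=N\ge 2$, we moreover have $\deg\sigma<\deg f$, and $\sigma_{*}\sO_X$ is a sum of line bundles on $Y$ with $\sO_Y$ the unique summand possessing sections.

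The remaining step, turning this into a contradiction, is the main obstacle. One approach is to iterate: each pass through the first two paragraphs peels off a further cyclic cover and strictly decreases the degree of the ``residual'' finite morphism, which cannot drop below $1$. Alternatively, one can argue by positivity: for $m\gg 0$ the summands of $(f^{n})_{*}A^{\otimes m}$ are globally generated, hence effective and nef, and they again split into $\langle M\rangle$-orbits; combining this with the fact that $f_{*}$ carries an ample class to an ample class (Kleiman's criterion, since $f_{*}\alpha\cdot z=\alpha\cdot f^{*}z>0$ for all $z\in\NE(X)\setminus\{0\}$) one aims to force the orbit $\sO_X,M,\dots,M^{\otimes(N-1)}$ to occur among these globally generated summands, whence $M$ would be effective and torsion, hence trivial — contradicting $N\ge 2$. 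Making this last descent/positivity argument work for an arbitrary finite endomorphism (not just an int-amplified one) is where the real difficulty lies.
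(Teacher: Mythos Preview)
Your argument that $M$ is torsion is correct and matches the paper's: both use the projection formula $f_*L\otimes M\simeq f_*L$ together with Krull--Schmidt to see that $\otimes M$ permutes the finite multiset of summands of $f_*L$. From this point on, however, your proposal diverges and does not close. The cyclic-cover construction is fine, but the ``iteration'' does not terminate as you suggest: the endomorphism $g=\sigma\circ\pi$ on $Y$ has $\deg g=\deg\sigma\cdot\deg\pi=\deg f$, so no degree actually drops on the object to which you would re-apply the lemma, and there is no evident nontrivial $M'\in\Pic(Y)$ with $g^*M'\simeq\sO_Y$ to feed back into the machine. Your alternative positivity sketch is likewise only heuristic, and you yourself flag that ``making this last descent/positivity argument work \ldots\ is where the real difficulty lies.'' So as written the proof is incomplete.

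The paper's route from ``$M$ is torsion of order $k$'' to ``$k=1$'' is much shorter and avoids covers entirely. Since $\otimes M$ acts freely on $\Pic(X)$ and preserves the multiset of summands of $f_*L$ (with their multiplicities), those summands break into $\langle M\rangle$-orbits of size exactly $k$; and because $\chi(L'\otimes M)=\chi(L')$ for torsion $M$, each orbit contributes $k\cdot\chi(L_i)$ for a representative $L_i\in\Lambda$. Using $\chi(f_*L)=\chi(L)$ (finiteness of $f$), one gets
\[
\chi(L)=k\sum_i m_i\,\chi(L_i),
\]
so $k\mid\chi(L)$ for every $L\in\Lambda$. But each representative $L_i$ again lies in $\Lambda$, so $k\mid\chi(L_i)$ as well, giving $k^2\mid\chi(L)$; iterating, $k^e\mid\chi(L)$ for all $e\ge1$ and all $L\in\Lambda$. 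Since $\Lambda$ contains an ample line bundle, some power of it has nonzero Euler characteristic, forcing $k=1$. This divisibility bootstrap is the missing idea in your attempt.
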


\begin{proof}
We take a line bundle $L \in \Lambda$.
Let $m(L',L)$ denote the multiplicity of a line bundle $L'$ as a direct summand of $f_*L$.
Let $M$ be a line bundle with $f^*M \simeq \mathcal{O}_X$.
We prove $m(L'\otimes M,L) \geq m(L',L)$.
We set $m:=m(L',L)$, than $L'^{\oplus m}$ is a direct summand of $f_*L$.
Hence, $(L'\otimes M)^{\oplus m}$ is a direct summand of $f_*L \otimes M$.
Since we have
\[
f_*L\otimes M \simeq f_*(L \otimes f^*M) \simeq f_*L,
\]
$(L'\otimes M)^{\oplus m}$ is a direct summand of $f_*L$, and in particular, we have $m(L'\otimes M,L) \geq m(L',L)$.
First we prove that $M$ is a torsion element of $\mathrm{Pic}(X)$.
By the assumption, there exists a line bundle $L'$ with $m(L',L) \geq 1$.
Hence, $L'\otimes M^k$ is a direct summand of $f_*L$ for all $k$, so we have $M^{k_1} \simeq M^{k_2}$ for some positive integers $k_1 \neq k_2$.
It means that $M$ is a torsion element of $\mathrm{Pic}(X)$.
Since $M$ is a torsion element, we obtain $m(L' \otimes M,L) = m(L',L)$ for all line bundles $L'$.
Let $k$ be the minimum positive integer of all positive integers with $M^k \simeq \mathcal{O}_X$.
Since we have $\chi(L'\otimes M)=\chi(L')$, we have
\[
\chi(L)=k(\chi(L_1)+ \cdots +\chi(L_m)),
\]
, in particular, $\chi(L)$ is divided by $k$.
Since $\chi(L_i)$ is also divided by $k$, $\chi(L)$ is divided by $k^2$.
Repeating such a process, $\chi(L)$ is divided by $k^e$ for all $L \in \Lambda$ and $e \in \Z_{\geq 0}$.
Since $\Lambda$ contains a very ample line bundle $L$ with $\chi(L) \neq 0$, we have $k=1$, and in particular, $M$ is trivial.
\end{proof}

\begin{lem}\label{lem:trivial albanese}
Let $X$ be a normal projective variety admitting an int-amplified endomorphism $f$.
We assume that there exists a subgroup $\Lambda \subset \mathrm{Pic}(X)$ containing an ample line bundle such that for every line bundle $L \in \Lambda$, $f_*L \simeq L_1 \oplus \cdots \oplus L_d$ for some line bundles $L_1, \ldots L_d$ contained in $\Lambda$.
Then the Albanese variety is a point.
\end{lem}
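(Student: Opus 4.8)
The strategy is to descend $f$ to the Albanese variety and to derive a contradiction from Lemma~\ref{lem:trivializable} together with the fact that an int-amplified endomorphism of a positive-dimensional variety has degree greater than one. Write $A:=\operatorname{Alb}(X)$ and let $a\colon X\to A$ be the Albanese morphism. Then $f$ induces an endomorphism $g\colon A\to A$, a homomorphism up to a translation, with $a\circ f=g\circ a$; since $f$ is surjective and $a(X)$ generates $A$, the homomorphism part of $g$ is a surjective isogeny. The plan is to show that the hypothesis forces $g$ to be an automorphism of $A$, whereas int-amplifiedness forbids this unless $\dim A=0$.

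First I would observe that $f^{*}\colon\Pic(X)\to\Pic(X)$ is injective: if $f^{*}M_{1}\simeq f^{*}M_{2}$, then $f^{*}(M_{1}\otimes M_{2}^{-1})\simeq\sO_{X}$, hence $M_{1}\simeq M_{2}$ by Lemma~\ref{lem:trivializable}. In particular $f^{*}$ is injective on $\Pic^{0}(X)$, and since $\Pic^{0}(X)$ is an abelian variety, an injective endomorphism of it has trivial kernel and is therefore an isomorphism. Under the canonical identification $\Pic^{0}(A)\simeq\Pic^{0}(X)$ given by $a^{*}$, the endomorphism $f^{*}|_{\Pic^{0}(X)}$ is conjugate to $g^{*}|_{\Pic^{0}(A)}$, that is, to the dual isogeny of the homomorphism part of $g$, because translations act trivially on $\Pic^{0}$. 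Hence this dual isogeny is an isomorphism, so the homomorphism part of $g$, and therefore $g$ itself, is an automorphism of $A$.

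Now suppose for contradiction that $\dim A\geq 1$. Since $a$ is $f$-equivariant, the endomorphism induced by $f$ on $a(X)$ (or on its normalization) is again int-amplified by~\cite{meng17}; on the other hand it is the restriction of the automorphism $g$, hence an automorphism, hence of degree one, whereas any int-amplified endomorphism $h$ of a projective variety of dimension $n\geq 1$ satisfies $\deg(h)\cdot H^{n}=(h^{*}H)^{n}>H^{n}$ for an ample divisor $H$ with $h^{*}H-H$ ample, so $\deg(h)>1$, a contradiction. One can also argue without passing through $a(X)$: the subspace $W:=a^{*}\bigl(N^{1}(A)\bigr)\subseteq N^{1}(X)$ is nonzero (as $a$ is non-constant, $a^{*}$ of an ample class on $A$ is nonzero) and $f^{*}$-invariant, and since $g$ is an automorphism, $g^{*}$ preserves the lattice $\NS(A)$, so $f^{*}|_{W}$ has determinant $\pm 1$ and thus an eigenvalue of absolute value at most $1$, contradicting the fact that all eigenvalues of $f^{*}$ on $N^{1}(X)$ have absolute value greater than $1$, as $f$ is int-amplified~\cite{meng17}. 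Therefore $\dim A=0$, i.e.\ the Albanese variety is a point.

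The step I expect to be most delicate is the bookkeeping around the Albanese variety: checking that $g$ is well defined up to translation, that translations act trivially on $\Pic^{0}$ so that $f^{*}|_{\Pic^{0}(X)}$ is indeed conjugate to the dual isogeny of $g$, and that the descent of the int-amplified property---or the invariant-subspace argument inside $N^{1}(X)$---goes through even though the Albanese morphism need not be surjective a priori. The remaining points are formal.
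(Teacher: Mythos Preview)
Your proposal is correct and follows essentially the same route as the paper: use Lemma~\ref{lem:trivializable} to see that $f^{*}$ is injective on $\Pic^{0}(X)$, hence an isomorphism there, hence the induced $g$ on $A=\operatorname{Alb}(X)$ is an automorphism, and then contradict int-amplifiedness when $\dim A>0$. The paper is terser at both ends: it invokes \cite[Theorem~1.8]{meng17} to get directly that $g$ is an int-amplified endomorphism of $A$, and then concludes in one line since an int-amplified automorphism forces the variety to be a point. Your two explicit contradictions (via $a(X)$, or via eigenvalues on $N^{1}$) unpack this last step. One small remark on your second route: when $a^{*}\colon N^{1}(A)\to N^{1}(X)$ is not injective, the claim that $\det(f^{*}|_{W})=\pm 1$ needs the extra observation that $(g^{*})^{-1}$ also preserves $\ker(a^{*})$; this follows because $f^{*}$ is injective on $N^{1}(X)$, but it is worth saying.
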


\begin{proof}
By \cite[Theorem 1.8]{meng17},
we have the following commutative diagram;
\[
\xymatrix{
X \ar[d]_f \ar[r]^a & A \ar[d]^g \\
X \ar[r]_a & A,
}
\]
where A is the albanese variety of $X$, $a$ is the albanese morphism of $X$ and $g$ is an int-amplified endomorphism.
By Lemma \ref{lem:trivializable}, for $f^* \colon \Pic^0(X) \to \Pic^0(X) $, the preimage of $0$ is a point, so $f^*$ is an isomorphism.
Since $g$ is a composition of a translation and the dual of $f^*$, $g$ is also an isomorphism.
In conclusion, $A$ is a point.
\end{proof}

\begin{lem}\label{lem:fiber product toric}
Let
\[
\xymatrix{
Y \ar[r]^{g} \ar[d]_{\pi} & Y \ar[d]^{\pi} \\
X \ar[r]_f & X
}
\]
be a commutative diagram of finite morphisms of smooth projective varieties such that $\pi$ is \'etale, $f$ and $g$ are  int-amplified endomorphisms.
We assume that there exists a subgroup $\Lambda \subset \mathrm{Pic}(X)$ containing an ample line bundle such that for every line bundle $L \in \Lambda$, $f_*L \simeq L_1 \oplus \cdots \oplus L_d$ for some line bundles $L_1, \ldots L_d$ contained in $\Lambda$.
Then the above diagram is a fiber product.
\end{lem}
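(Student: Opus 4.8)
The plan is to rephrase the statement as the assertion that a natural finite étale morphism is an isomorphism, and to exclude the appearance of spurious connected components by feeding a nontrivial line bundle into Lemmas~\ref{lem:numerically trivial} and~\ref{lem:trivializable}.

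Put $d:=\deg f$ and (harmlessly) assume $Y$ connected. Form $W:=X\times_{f,X,\pi}Y$, with projections $p_1\colon W\to X$ and $p_2\colon W\to Y$; here $p_1$ is the base change of $\pi$ along $f$, hence finite étale of degree $\deg\pi$, and $p_2$ is the base change of $f$ along $\pi$, hence finite flat of degree $d$. The relation $\pi\circ g=f\circ\pi$ yields a canonical morphism $h:=(\pi,g)\colon Y\to W$ satisfying $p_1\circ h=\pi$ and $p_2\circ h=g$; the given square is a fibre product precisely when $h$ is an isomorphism. As $h$ is a morphism of étale $X$-schemes it is étale, and as $p_1\circ h=\pi$ is finite while $p_1$ is separated, $h$ is finite; hence $h$ is finite étale and $h(Y)$ is open and closed in $W$. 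Since $p_1$ and $\pi=p_1\circ h$ are finite étale of the same degree $\deg\pi$, counting points in a general fibre of $\pi$ shows that $h$ is an isomorphism as soon as it is surjective; so it suffices to prove $h(Y)=W$.

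Assume for contradiction that $W':=W\setminus h(Y)$ is nonempty. Then $p_2|_{W'}\colon W'\to Y$ and $p_2|_{h(Y)}\colon h(Y)\to Y$ are finite flat of positive degree, so their trace maps (we are in characteristic zero) realise $\mathcal{O}_Y$ as a direct summand of both $(p_2|_{W'})_*\mathcal{O}_{W'}$ and $(p_2|_{h(Y)})_*\mathcal{O}_{h(Y)}$. On the other hand $\mathcal{O}_X\in\Lambda$, so the hypothesis gives $f_*\mathcal{O}_X\simeq\bigoplus_{i=1}^{d}N_i$ with all $N_i\in\Lambda$, and flat base change along the Cartesian square gives $(p_2)_*\mathcal{O}_W\simeq\pi^*f_*\mathcal{O}_X\simeq\bigoplus_{i=1}^{d}\pi^*N_i$. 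Comparing the splitting $(p_2)_*\mathcal{O}_W=(p_2|_{h(Y)})_*\mathcal{O}_{h(Y)}\oplus(p_2|_{W'})_*\mathcal{O}_{W'}$ with $\bigoplus_i\pi^*N_i$ and invoking the Krull--Schmidt property for coherent sheaves on the projective variety $Y$, we find two distinct indices $i\ne j$ with $\pi^*N_i\simeq\pi^*N_j\simeq\mathcal{O}_Y$. Now $\mathcal{O}_X$ is a direct summand of $f_*\mathcal{O}_X$ (trace of $f$), while $\dim_k\Hom(\mathcal{O}_X,f_*\mathcal{O}_X)=\dim_k H^0(X,\mathcal{O}_X)=1$, so exactly one $N_i$ is trivial; hence one of $N_i,N_j$ --- call it $N\in\Lambda$ --- has $N\not\simeq\mathcal{O}_X$, $\pi^*N\simeq\mathcal{O}_Y$, and $N$ a direct summand of $f_*\mathcal{O}_X$.

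Because $\pi$ is finite étale of degree $n:=\deg\pi$, the group $K:=\ker(\pi^*\colon\Pic(X)\to\Pic(Y))$ consists of $n$-torsion line bundles and is therefore finite; it contains $N$, and it is $f^*$-stable since $\pi^*\circ f^*=g^*\circ\pi^*$. By Lemma~\ref{lem:trivializable}, $f^*$ is injective on $\Pic(X)$, so $f^*|_K$ is an automorphism of the finite group $K$ and $(f^r)^*N\simeq N$ for some $r\ge 1$. Since $f^r$ is again int-amplified, Lemma~\ref{lem:numerically trivial} shows $N$, hence also $f^*N^{\vee}$, is numerically trivial. But $N$ being a direct summand of $f_*\mathcal{O}_X$ gives $H^0(X,N^{\vee}\otimes f_*\mathcal{O}_X)\ne 0$, which by the projection formula equals $H^0(X,f_*(f^*N^{\vee}))=H^0(X,f^*N^{\vee})$; a numerically trivial line bundle admitting a nonzero global section is trivial, so $f^*N^{\vee}\simeq\mathcal{O}_X$, and Lemma~\ref{lem:trivializable} forces $N^{\vee}\simeq\mathcal{O}_X$. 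This contradicts $N\not\simeq\mathcal{O}_X$. Hence $h$ is surjective, so an isomorphism, and the square is a fibre product.

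The first and last paragraphs are formal; the step I expect to be the crux is the middle one --- observing that a connected component of $f^*Y$ disjoint from the image of $h$ must raise the multiplicity of $\mathcal{O}_Y$ among the summands $\pi^*N_i$, which is exactly what produces a nontrivial line bundle on $X$ that is trivialised by $\pi$ and occurs in $f_*\mathcal{O}_X$. Two technical points worth noting: $f$ (and hence $g$) is flat, being a finite morphism of smooth varieties, which legitimises the trace splittings used above; and the reduction to connected $Y$, which is the only case needed here.
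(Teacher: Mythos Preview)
Your proof is correct and follows essentially the same strategy as the paper's: reduce to connectedness of the fibre product $W=X\times_X Y$, decompose $f_*\mathcal{O}_X$ into line bundles, and show that only the trivial summand can pull back to $\mathcal{O}_Y$, the punchline in both cases being that the offending summand $N$ has $f^*N^{\vee}$ numerically trivial with a section, hence trivial, whence $N$ is trivial by Lemma~\ref{lem:trivializable}. The differences are cosmetic: where the paper simply counts $h^0(\pi^*f_*\mathcal{O}_X)$ and uses $g$ directly (via $h^0(g^*\pi^*M_i)\geq 1$) to force $\pi^*f^*M_i$ trivial, you invoke Krull--Schmidt to locate two trivial summands and then run a finite-group argument on $K=\ker\pi^*$. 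One small redundancy: once you know $N\in K$, it is already $n$-torsion and hence numerically trivial, so the detour through $(f^r)^*N\simeq N$ and Lemma~\ref{numerically trivial} is unnecessary --- you can pass straight to step~9.
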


\begin{proof}
By the \'etaleness, it is enough to show that $Y' := X \times_X Y$ is connected, and in particular, it is enough to show that $h^0(\pi^*f_*\sO_X)=1$.
We set $f_*\sO_X \simeq M_1 \oplus \cdots \oplus M_d $ for line bundles $M_1, \ldots M_d$.
We note that
\[
h^0(f^*M_i^{-1})=h^0(f_*\sO_X \otimes M_i^{-1}) \geq h^0(\sO_X) \geq 1.
\]
We may assume $h^0(M_1) =1$, then $h^0(f^*M_1^{-1}) \geq 1$ implies that $M_1$ is trivial.
Hence we have $\pi^*f_*\sO_X \simeq \sO_Y \oplus \pi^*M_2 \oplus \cdots \oplus \pi^*M_d $ and
it is enough to show $h^0(\pi^*M_i) = 0$ for all $i=2, \ldots , d$.

Suppose we have $h^0(\pi^*M_2) \geq 1$.
Since $h^0(\pi^*f^*M_2)$ and $h^0(\pi^*f^*M_2^{-1})$ is grater than zero, $\pi^*f^*M_2$ is trivial, and in particular, $f^*M_2$ is numerically trivial.
Since $h^0(f^*M_2^{-1})$ is grater that zero, $f^*M_2$ is trivial.
By Lemma \ref{lem:trivializable}, $M_2$ is also trivial, but it contradicts to $h^0(M_2)=0$.
\end{proof}

\begin{lem}\label{fano type}
Let $X$ be a smooth projective variety admitting an int-amplified endomorphism $f$.
If for every line bundle $L$ on $X$, $f_*L$ is a direct sum of line bundles, then $X$ is of Fano type, in particular, $\Pic(X)$ is free and $\Cox(X)$ is finitely generated.
\end{lem}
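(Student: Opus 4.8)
The plan is to reduce to \cite[Theorem 1.3]{yoshikawa20fano}, which characterizes Fano type among varieties admitting an int-amplified endomorphism in terms of Albanese varieties of equivariant \'etale covers. Concretely, I would show that \emph{every finite \'etale cover $\pi\colon\widetilde X\to X$ equivariant under $f$ has trivial Albanese variety}, and then read off the remaining assertions from standard properties of Fano type varieties.

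So fix a finite \'etale $\pi\colon\widetilde X\to X$ together with a lift $\tilde f\colon\widetilde X\to\widetilde X$, $\pi\circ\tilde f=f\circ\pi$; then $\widetilde X$ is again smooth projective. First I would note that $\tilde f$ is int-amplified: if $H$ is ample on $X$ with $f^*H-H$ ample, then $\pi^*H$ is ample and $\tilde f^*\pi^*H-\pi^*H=\pi^*(f^*H-H)$ is ample. Applying Lemma \ref{lem:fiber product toric} to the square, with $\Lambda=\Pic(X)$ (which contains an ample line bundle and has the assumed splitting property), shows the square is a fiber product. Since $\pi$ is flat and the square is cartesian, flat base change gives $\tilde f_*(\pi^*M)\cong\pi^*(f_*M)$ for every line bundle $M$ on $X$; by hypothesis $f_*M$ is a direct sum of line bundles, so $\tilde f_*(\pi^*M)$ is a direct sum of line bundles contained in $\Lambda':=\pi^*\Pic(X)$. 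As $\Lambda'$ contains the ample bundle $\pi^*H$, Lemma \ref{lem:trivial albanese} then applies to $(\widetilde X,\tilde f,\Lambda')$ and forces the Albanese variety of $\widetilde X$ to be a point. The same argument works with any iterate $f^n$ in place of $f$ (its pushforwards still split into line bundles, by induction), so covers equivariant under some iterate are handled as well.

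With the triviality of all these Albanese varieties in hand, \cite[Theorem 1.3]{yoshikawa20fano} shows $X$ is of Fano type. Then $X$ is rationally connected, hence simply connected, so $\Pic(X)$ is torsion-free; moreover $H^1(X,\sO_X)=0$, so $\Pic(X)=\NS(X)$ is finitely generated, and therefore free. Finite generation of $\Cox(X)$ then follows from \cite{bchm}.

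The step that needs care is the inheritance of the splitting property by the equivariant covers: it is precisely the fiber-product conclusion of Lemma \ref{lem:fiber product toric} that lets one commute $\tilde f_*$ past $\pi^*$ --- the naive projection-formula computation only yields $f_*(M\otimes\pi_*\sO_{\widetilde X})$, not $(f_*M)\otimes\pi_*\sO_{\widetilde X}$. Once that point is secured, the rest is an assembly of the cited results and I do not anticipate a real obstacle.
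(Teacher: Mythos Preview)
Your proposal is correct and follows essentially the same approach as the paper: use Lemma~\ref{lem:fiber product toric} to make the equivariant square cartesian, then base-change to transport the splitting hypothesis to $\Lambda'=\pi^*\Pic(X)$ and apply Lemma~\ref{lem:trivial albanese}. The only cosmetic difference is that the paper invokes \cite[Theorems~1.3,~1.5]{yoshikawa20fano} to produce a \emph{single} cover $Y\to X$ that is already of Fano type over its Albanese, shows that Albanese is a point, and then uses rational connectedness of $Y$ to force $\pi$ to be trivial---rather than verifying the Albanese condition for all equivariant covers and appealing to the characterization directly as you do.
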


\begin{proof}
By \cite[Theorem1.3, 1.5]{yoshikawa20fano}, we have a commutative diagram
\[
\xymatrix{
Y \ar[r]^{g} \ar[d]_{\pi} & Y \ar[d]^{\pi} \\
X \ar[r]_f & X,
}
\]
where $\pi$ is an \'etale finite morphism, $g$ is an int-maplified endomorphism, and $Y$ is a smooth variety of Fano type over the Albanese variety of $Y$.
By Lemma \ref{lem:fiber product toric}, the above diagram is cartesian.
Let $\Lambda:=\pi^*\Pic(X)$ be a subgroup of $\Pic(Y)$, then it contains an ample line bundle.
Furthermore, for every $\pi^*L \in \Lambda$, we have
\[
g_*\pi^*L \simeq \pi^*f_*L.
\]
By the assumption, $f_*L$ is a direct sum of line bundles.
In particular, $g_*\pi^*L \simeq L_1 \oplus \cdots \oplus L_d$ for some $L_i \in \Lambda$.
By Lemma \ref{lem:trivial albanese}, the Albanese variety of $Y$ is a point, in particular, $Y$ is of Fano type.
Thus $X$ is smooth and rationally connected, so $\pi$ is trivial and $X$ is also of Fano type.
By \cite[Corollary 1.1.9]{bchm}, $\Cox(X)$ is finitely generated.
\end{proof}

\begin{thm}\label{toric}
Let $X$ be a smooth projective variety admitting an int-amplified endomorphism $f$.
If for every line bundle $L$ on $X$, $f_*L$ is a direct sum of line bundles, then $X$ is toric.
\end{thm}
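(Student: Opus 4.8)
The plan is to reduce the statement to Lemma~\ref{characterization of regularity}: once we know that $X$ is of Fano type and that the endomorphism $\phi$ of the Cox ring induced by $f$ is flat, that lemma immediately yields that $X$ is smooth toric. So the real content is the flatness of $\phi$, and this is extracted from the hypothesis by running the argument of Theorem~\ref{only if part} in reverse.

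First I would apply Lemma~\ref{fano type} to conclude that $X$ is of Fano type; in particular $\Pic(X)$ is free and $R:=\Cox(X)$ is a finitely generated $k$-algebra. By Proposition~\ref{endomorphism of cox ring}, $f$ induces a finite injective $k$-algebra homomorphism $\phi\colon R\to R$ with $\phi^{-1}(\m)=\m$, and by Lemma~\ref{numerically trivial} every line bundle $L$ with $f^{*}L\simeq L$ is numerically trivial. Thus all the hypotheses of Lemma~\ref{characterization of regularity} are satisfied, and it suffices to show that $\phi$ is flat, i.e.\ that $\phi_{*}R$ is a flat $R$-module.

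To prove this, for a line bundle $L$ on $X$ I would consider the graded $R$-module
\[
E_{L}:=\bigoplus_{M\in\Pic(X)}H^{0}(X,f_{*}L\otimes M),
\]
the graded module associated to the sheaf $f_{*}L$. Using the projection formula $f_{*}L\otimes M\simeq f_{*}(L\otimes f^{*}M)$ one obtains a splitting injection $E_{L}\simeq\phi_{*}\big(\bigoplus_{M}H^{0}(X,L\otimes f^{*}M)\big)\to\phi_{*}R$, the complement being the sum of the homogeneous pieces of $R$ in degrees outside $[L]+f^{*}\Pic(X)$, which is $\phi$-stable. Since $f^{*}\colon\Pic(X)\to\Pic(X)$ is injective with finite cokernel ($f$ is finite surjective and $\Pic(X)$ is free), choosing line bundles $N_{1},\dots,N_{n}$ whose classes represent the cosets of $f^{*}\Pic(X)$ and decomposing $R=\bigoplus_{\nu\in\Pic(X)}H^{0}(X,L^{\nu})$ according to these cosets gives an isomorphism of $R$-modules $\phi_{*}R\simeq\bigoplus_{j=1}^{n}E_{N_{j}}$. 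Hence it is enough to show that each $E_{N_{j}}$ is free. But by hypothesis $f_{*}N_{j}\simeq L_{\lambda_{1}}\oplus\cdots\oplus L_{\lambda_{d}}$ for line bundles $L_{\lambda_{i}}$, and since $E_{(-)}$ sends $L_{\lambda}$ to the shift $R(\lambda)$ and commutes with finite direct sums, $E_{N_{j}}\simeq R(\lambda_{1})\oplus\cdots\oplus R(\lambda_{d})$ is free. Therefore $\phi_{*}R$ is free, $\phi$ is flat, and Lemma~\ref{characterization of regularity} shows that $X$ is smooth toric, in particular toric.

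I expect the only point requiring care to be the identification in the third paragraph: that the projection formula intertwines the natural $R$-module structure on the module $E_{L}$ of $f_{*}L$ with the $\phi$-twisted structure it inherits as a summand of $\phi_{*}R$, so that $\phi_{*}R$ really does split as the finite direct sum of the $E_{N_{j}}$. This bookkeeping, together with the projection-formula manipulation, is precisely where the hypothesis ``$f_{*}L$ is a direct sum of line bundles for every $L$'' is used; everything else is supplied by Lemma~\ref{fano type}, Proposition~\ref{endomorphism of cox ring}, Lemma~\ref{numerically trivial}, and Lemma~\ref{characterization of regularity}.
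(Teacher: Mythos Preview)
Your proposal is correct and follows essentially the same route as the paper: reduce to flatness of $\phi$ via Lemma~\ref{fano type}, Lemma~\ref{numerically trivial}, and Lemma~\ref{characterization of regularity}, then decompose $\phi_{*}R$ according to the cosets of $f^{*}\Pic(X)$ in $\Pic(X)$ and use the projection formula together with the hypothesis to see each summand is free. The paper's modules $\phi_{*}M_{\mu}$ are exactly your $E_{N_{j}}$, and the bookkeeping point you flag about the $R$-module structure is precisely what makes the decomposition work.
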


\begin{proof}
By Lemma \ref{fano type}, $\Pic(X)$ is free and $R:=\Cox(X)$ is finitely generated.
By Lemma \ref{characterization of regularity} and Lemma \ref{numerically trivial}, it is enough to show that $\phi \colon R \to R$ induced by $f$ is flat.

For $\mu \in \Pic(X)/f^*\Pic(X)$, we take a representation $\mu' \in \Pic(X)$ of $\mu$.
Then we have
\[
\phi_*R \simeq \bigoplus_{\mu \in \Pic(X)/f^*\Pic(X)} \phi_*M_\mu
\]
as an $R$-module, where $M_\mu$ is defined by
\[
M_{\mu} := \bigoplus_{\lambda \in \Pic(X)} H^0(X,L_{\mu'}\otimes f^*L_{\lambda})
\]
By the projection formula, we have
\[
\phi_*M_{\mu}=\bigoplus_{\lambda \in \Pic(X)} H^0(X,f_*(L_{\mu'} \otimes f^*L_{\lambda}))=\bigoplus_{\lambda \in \Pic(X)} H^0(X,f_*L_{\mu'}\otimes L_{\lambda}).
\]
By the assumption, we have $f_*L_{\mu'} \simeq L_{\lambda_1} \oplus \cdots \oplus L_{\lambda_d}$.
Hence we have
\[
\phi_*M_{\mu} = \bigoplus_{i=1}^{d} \bigoplus_{\lambda \in \Pic(X)} H^0(X,L_{\lambda_i+\lambda}) \simeq \bigoplus_{i=1}^{d}R.
\]
It implies that $\phi$ is a flat finite homomorphism.
\end{proof}

\bibliography{bibliography}

\providecommand{\bysame}{\leavevmode\hbox to3em{\hrulefill}\thinspace}
\providecommand{\MR}{\relax\ifhmode\unskip\space\fi MR }
\providecommand{\MRhref}[2]{%
  \href{http://www.ams.org/mathscinet-getitem?mr=#1}{#2}
}
\providecommand{\href}[2]{#2}
\begin{thebibliography}{BCHM10}

\bibitem[Ach15]{achinger}
Piotr Achinger, \emph{A characterization of toric varieties in characteristic
  {$p$}}, Int. Math. Res. Not. IMRN (2015), no.~16, 6879--6892. \MR{3428948}

\bibitem[AIM06]{aim}
Luchezar~L. Avramov, Srikanth Iyengar, and Claudia Miller, \emph{Homology over
  local homomorphisms}, Amer. J. Math. \textbf{128} (2006), no.~1, 23--90.
  \MR{2197067}

\bibitem[BCHM10]{bchm}
Caucher Birkar, Paolo Cascini, Christopher~D. Hacon, and James McKernan,
  \emph{Existence of minimal models for varieties of log general type}, J.
  Amer. Math. Soc. \textbf{23} (2010), no.~2, 405--468. \MR{2601039}

\bibitem[Bea01]{beauville}
Arnaud Beauville, \emph{Endomorphisms of hypersurfaces and other manifolds},
  Internat. Math. Res. Notices (2001), no.~1, 53--58. \MR{1809497}

\bibitem[CLS11]{cox}
David~A. Cox, John~B. Little, and Henry~K. Schenck, \emph{Toric varieties},
  Graduate Studies in Mathematics, vol. 124, American Mathematical Society,
  Providence, RI, 2011. \MR{2810322}

\bibitem[HK00]{hu-keel}
Yi~Hu and Sean Keel, \emph{Mori dream spaces and {GIT}}, vol.~48, 2000,
  Dedicated to William Fulton on the occasion of his 60th birthday,
  pp.~331--348. \MR{1786494}

\bibitem[HM03]{hm03}
Jun-Muk Hwang and Ngaiming Mok, \emph{Finite morphisms onto {F}ano manifolds of
  {P}icard number 1 which have rational curves with trivial normal bundles}, J.
  Algebraic Geom. \textbf{12} (2003), no.~4, 627--651. \MR{1993759}

\bibitem[HN11]{hwang-nakayama}
Jun-Muk Hwang and Noboru Nakayama, \emph{On endomorphisms of {F}ano manifolds
  of {P}icard number one}, Pure Appl. Math. Q. \textbf{7} (2011), no.~4,
  Special Issue: In memory of Eckart Viehweg, 1407--1426. \MR{2918167}

\bibitem[KM98]{kollar-mori}
J\'{a}nos Koll\'{a}r and Shigefumi Mori, \emph{Birational geometry of algebraic
  varieties}, Cambridge Tracts in Mathematics, vol. 134, Cambridge University
  Press, Cambridge, 1998, With the collaboration of C. H. Clemens and A. Corti,
  Translated from the 1998 Japanese original. \MR{1658959}

\bibitem[Men17]{meng17}
Sheng Meng, \emph{Building blocks of amplified endomorphisms of normal
  projective varieties}, Mathematische Zeitschrift (2017), 1--21.

\bibitem[MZ19]{meng-zhang-char}
Sheng Meng and De-Qi Zhang, \emph{Characterizations of toric varieties via
  polarized endomorphisms}, Math. Z. \textbf{292} (2019), no.~3-4, 1223--1231.
  \MR{3980290}

\bibitem[MZ20]{meng-zhong}
Sheng Meng and Guolei Zhong, \emph{Rigidity of rationally connected smooth
  projective varieties from dynamical viewpoints}, arXiv preprint
  arXiv:2005.03983 (2020).

\bibitem[Nak02]{nakayama02}
Noboru Nakayama, \emph{Ruled surfaces with non-trivial surjective
  endomorphisms}, Kyushu J. Math. \textbf{56} (2002), no.~2, 433--446.
  \MR{1934136}

\bibitem[Oka16]{Okawa}
Shinnosuke Okawa, \emph{On images of {M}ori dream spaces}, Math. Ann.
  \textbf{364} (2016), no.~3-4, 1315--1342. \MR{3466868}

\bibitem[PS89]{ps89}
K.~H. Paranjape and V.~Srinivas, \emph{Self-maps of homogeneous spaces},
  Invent. Math. \textbf{98} (1989), no.~2, 425--444. \MR{1016272}

\bibitem[Tho00]{thomsen}
Jesper~Funch Thomsen, \emph{Frobenius direct images of line bundles on toric
  varieties}, J. Algebra \textbf{226} (2000), no.~2, 865--874. \MR{1752764}

\bibitem[Yos20]{yoshikawa20fano}
Shou Yoshikawa, \emph{Structure of fano fibrations of varieties admitting an
  int-amplified endomorphism}, arXiv preprint arXiv:2002.01257 (2020).

\end{thebibliography}
\bibliographystyle{amsalpha}
\end{document}